\DeclareMathOperator{\sgn}{sgn}
\newcommand{\e}{\mathrm{e}}
\newcommand{\dd}{\mathrm{d}}
\newcommand{\Uad}{U_{\textup{ad}}}
\newcommand{\N}{\mathbb{N}}
\newcommand{\R}{\mathbb{R}}
\newcommand{\C}{\mathbb{C}}
\newtheorem{theorem}{Theorem}[section]
\newtheorem{lemma}[theorem]{Lemma}
\newtheorem{proposition}[theorem]{Proposition}
\theoremstyle{definition}
\newtheorem{remark}[theorem]{Remark}
\newtheorem{Algorithm}[theorem]{Algorithm}
\crefname{Algorithm}{Algorithm}{Algorithms}
\crefname{ALC@unique}{Line}{Lines}
\title[A control with infinitely many switches]
{A bang-bang solution with infinitely many switching points for a parabolic boundary control problem with terminal observation} 
\author[Constantin Christof]{}
\subjclass{Primary: 49J30; Secondary: 49K20, 49K30.}
\keywords{Parabolic PDE, optimal control, bang-bang, chattering, Zeno behavior.}
\thanks{$^*$Corresponding author: Constantin Christof}
\begin{document}
\maketitle

\centerline{\scshape
Constantin Christof$^{{\href{mailto:christof@mathematik.tu-darmstadt.de}{\textrm{\Letter}}}*}$}

\medskip

{\footnotesize
 \centerline{TU Darmstadt,
Department of Mathematics,}
\centerline{Dolivostra{\ss}e 15,
64293 Darmstadt, Germany}
} 

\bigskip

 \centerline{(Communicated by Handling Editor)}


\begin{abstract}
We study a parabolic boundary control problem with one spatial dimension, control constraints of box type, and an objective function that measures the $L^2$-distance to a desired terminal state. It is shown that, for a certain choice of the desired state, the considered problem possesses an optimal control that is \emph{chattering}, i.e., of bang-bang type with infinitely many switching points and a positive objective function value. Whether such a solution is possible has been an open question in the literature. We are able to answer this question in the affirmative by means of Fourier analysis and an auxiliary result on the existence of power series with a certain structure and sign-changing behavior. The latter may also be of independent interest.
\end{abstract}



\section{Introduction} 
\label{sec:1}
This paper is concerned with optimal control problems of the following type:
\begin{equation}
\label{eq:P}
\tag{P}
\left.
\begin{aligned}
\text{Minimize } & \frac12 \left \| y(T, \cdot) - y_d \right \|_{L^2(0,1)}^2
\\
\text{w.r.t.\ } &y \in L^2(0,T;H^1(0,1)) \cap H^1(0,T;H^1(0,1)^*),~~u \in L^2(0,T),
\\
\text{ s.t.\ } & 
\left \{
\begin{aligned}
		\partial_t y - \partial_x^2 y &= 0 &&\hspace{-0.15cm}\text{in } (0,T) \times (0,1), \\
		\partial_x y &= 0 &&\hspace{-0.15cm}\text{on } (0,T) \times \{0\}, \\
		\partial_x y &= u &&\hspace{-0.15cm}\text{on } (0,T) \times \{1\}, \\
		y &= 0  &&\hspace{-0.15cm}\text{on } \{0\} \times (0,1), \\
		\end{aligned}
		\right .
\\
\text{ and } & u \in \Uad := \{ v \in L^2(0,T) \mid  -1 \leq v \leq 1 \text{ a.e.\ in } (0,T)\}.
\end{aligned}
~~\right \}
\end{equation}
Here, $y_d \in L^2(0,1)$ is a given desired terminal state;
$T>0$ is the final observation time;
the appearing Sobolev, Lebesgue, and Bochner spaces
are defined as usual;
and the temporal and spatial derivatives $\partial_t$, $\partial_x$, and $\partial_x^2$
are understood in the weak sense;
see \cref{sec:2}.
The main purpose of this work is to construct a target state $y_d$
for which \eqref{eq:P} possesses an optimal control $\bar u$ that
 is \emph{chattering}, 
i.e., of bang-bang type with infinitely many switching points and a positive objective function value.
The problem of deciding whether such an optimal control is possible 
for \eqref{eq:P} and of constructing a corresponding example
has been referred to as difficult and open in the literature;
see \cite[pages 290 and 297]{troeltzsch2023bang}
and \cite[Remark 3.4]{Glashoff1975}. 
With the present paper, we fill this gap in the theory of bang-bang optimal control problems. 
For the main result of our analysis, we refer the reader to \cref{th:main_bang}. 

Before we begin with our analysis, let us put our work into perspective:
Optimal control problems of the type \eqref{eq:P} have been 
studied (at least) since the nineteen-sixties and -seventies; see the seminal works of Fuller 
\cite{Fuller1963} and Glashoff \cite{Glashoff_Krabs1975,Glashoff1975}. 
Their main feature is that they promote 
optimal controls $\bar u$ which satisfy 
$\bar u(t) \in \{-1,1\}$ for a.a.\ $t \in (0,T)$ and, thus, 
can be realized 
comparatively easily in engineering applications. The latter  is in particular true
if it can be shown that the solutions do not oscillate too erratically
between the two available function values $-1$ and $1$. 
As a consequence, the study of the switching behavior of optimal controls of problems like \eqref{eq:P}
has been the subject of extensive research in the past.
Compare, e.g., with \cite{Dhamo2011,Fuller1963,Glashoff_Krabs1975,Glashoff1975,Glashoff1979,Glashoff1980,Karafiat1977,Qin2021,troeltzsch2023bang,TroeltzschWachsmuth2018,White1984} in this context.

A main result in this research area is the 
so-called \emph{bang-bang principle}; see \cite{Glashoff_Krabs1975,Glashoff1975}, 
\cite[Section 3.2.4]{Troeltzsch2010}, and \cref{prop:bang_bang_principle} below. 
This principle implies that,
if the optimal objective function value of \eqref{eq:P} is positive
(i.e., if there is no admissible state $y$ satisfying $y_d = y(T,\cdot)$ a.e.\ in $(0,1)$),
then \eqref{eq:P} admits precisely one optimal control $\bar u$
and
this control possesses a (necessarily unique) right-continuous representative
$\bar u \colon [0,T) \to \R$ 
which satisfies $\bar u(t) \in \{-1,1\}$ for all $t \in [0,T)$ and changes its function value
at most a finite number of times on each of the intervals $[0, T - \varepsilon]$, 
$\varepsilon > 0$.
Note that the latter implies in particular that 
$\bar u \colon [0,T) \to \R$  has bounded variation 
on $[0, T - \varepsilon]$ for all $\varepsilon > 0$ and
is discontinuous on $[0,T)$ at at most countably many 
points---the so-called switching points---which can accumulate at $T$ only.  

A natural question that arises with regard to the bang-bang principle 
is whether it is indeed possible that 
$\bar u \colon [0,T) \to \R$ switches its function value infinitely often on $[0,T)$.
Solutions with such an infinite number of switching points (and a positive objective function value) 
are often 
called \emph{chattering} (see \cite[Section~2.3.4]{Sager2012})
and are of particular interest because they provide benchmark problems 
for numerical algorithms that are provably hard;
cf.\ \cite[Section~5]{Sager2012}. Moreover, if it can be shown
that chattering solutions do not exist,
this has far-reaching consequences for the derivation 
of finite element error estimates and quadratic growth conditions
as it implies, for example, that optimal controls possess $BV([0,T])$-regularity;
cf.\ \cite{Christof2018,Casas2017,Muench2013}. 
We remark that the absence of chattering solutions 
can indeed be established
for problems of the type \eqref{eq:P} if the
objective function has the form $\|y(T, \cdot) - y_d\|_{L^\infty(0,1)}$, $y_d \in L^\infty(0,1)$;
see \cite{Glashoff1979,Glashoff1980,Karafiat1977}. The question of whether a similar effect is 
present for the problem \eqref{eq:P} itself with its terminal $L^2(0,1)$-tracking term 
or if, for this kind of problem, chattering can occur
has---to the best of the author's knowledge---not yet been answered in the literature.
We again refer to \cite[pages 290 and 297]{troeltzsch2023bang}
and \cite[Remark 3.4]{Glashoff1975} for comments on this topic. 

The main difficulty that arises when one tries 
to construct an instance of problem
\eqref{eq:P} which is subject to the bang-bang principle and possesses a chattering solution 
is that the necessary and sufficient optimality conditions of \eqref{eq:P}
imply that the switching structure of an optimal control $\bar u$ with associated
state $\bar y$ 
is dictated by the sign of the trace $\bar p (\cdot, 1)\colon [0,T) \to \R$
of the solution $\bar p$ of the adjoint equation
\begin{equation}
\label{eq:adjoint}
\begin{aligned}
		-\partial_t \bar p - \partial_x^2 \bar p  &= 0 &&\hspace{-0.15cm}\text{in } (0,T) \times (0,1), \\
		\partial_x \bar p  &= 0 &&\hspace{-0.15cm}\text{on } (0,T) \times \{0 \}, \\
		\partial_x \bar p  &= 0 &&\hspace{-0.15cm}\text{on } (0,T) \times \{ 1\}, \\
		\bar p &= \bar y- y_d   &&\hspace{-0.15cm}\text{on } \{T\} \times (0,1).
\end{aligned}
\end{equation} 
More precisely, for the case of a positive optimal objective function value, 
it can be shown that  $\bar u(t) = - \sgn(\bar p(t, 1))$ holds for a.a.\ $t \in (0,T)$;
see \cref{prop:bang_bang_principle} below. This entails that, 
in order to construct a chattering optimal control $\bar u$, one has to find a terminal datum 
for the system \eqref{eq:adjoint} that gives rise to a function $\bar p$
which is highly oscillatory on $[0,T) \times \{1\}$---a task that is 
nontrivial given the mollifying effects of the heat equation and the smoothness 
properties of caloric functions. Additionally, one, of course,
has to close the system so that the resulting $\bar p$
induces an optimal control $\bar u = - \sgn(\bar p(\cdot, 1))$
whose associated state $\bar y$ fits to the terminal datum in \eqref{eq:adjoint} that one started with. 
The strategy that we employ to overcome these difficulties 
can be summarized as follows:
\begin{enumerate}[label=\roman*)]
\item\label{strat:item:i}
Using Fourier analysis, we reduce the problem of finding a suitable 
terminal datum for \eqref{eq:adjoint} to that of constructing a real power series 
that has radius of convergence one,
involves only certain exponents $\alpha_m \in \N$,
has a coefficient sequence that is $q$-summable for all $q \in (1, \infty)$,
and changes its sign infinitely many times on the interval $(0,1)$; see question \eqref{eq:Q} below.
\item\label{strat:item:ii}
 We establish the existence of a power series with the structural properties and sign-changing behavior 
in step \ref{strat:item:i}
by means of an explicit algorithmic construction that relies on properties of 
the harmonic and geometric series. 
\item\label{strat:item:iii}
 With the power series from step \ref{strat:item:ii} and the associated terminal datum for \eqref{eq:adjoint} 
at hand---$w \in L^2(0,1)$  let's say---we solve  \eqref{eq:adjoint}, define $\bar u := - \sgn(\bar p(\cdot,1))$,
and solve the governing equation of \eqref{eq:P} 
for the associated state $\bar y$. 
\item\label{strat:item:iv}
 We define $y_d := \bar y(T, \cdot) - w$, note that, with this definition,
the necessary optimality conditions of \eqref{eq:P} are satisfied, 
and use the convexity of the problem to conclude that we have indeed 
found a chattering optimal control. 
\end{enumerate}

We remark that the Fourier approach in step \ref{strat:item:i} as well as 
the arguments in 
steps \ref{strat:item:iii} and \ref{strat:item:iv} above follow very closely 
the ideas of \cite{troeltzsch2023bang}
where the construction of bang-bang solutions 
with a finite number of switching points at prescribed 
locations is discussed. In this paper, Fourier analysis is used to reduce 
the problem of finding a suitable terminal datum for \eqref{eq:adjoint} 
to that of constructing a polynomial with certain properties. 
The existence of this polynomial is then established by means of a 
finite-dimensional linear system and results on 
generalized Vandermonde determinants. 
Note that, due to its reliance on finite-dimensional arguments, 
this approach cannot be extended
to the construction of chattering optimal controls. In fact, 
the main difficulty in constructing such a control for \eqref{eq:P} is
establishing that a power series with the properties in step \ref{strat:item:i}
above can indeed be found. The existence result that 
we prove for this purpose---\cref{th:power_series}---can be seen 
as the second main result of this work. Lastly, we would like to point out that
our construction does not allow 
to prescribe where $\bar u$ changes its function value. Our approach only yields information about
how often changes of the function value occur but no precise 
knowledge about where the switching points lie. This is a main difference 
to the results of \cite{troeltzsch2023bang}.

We conclude this introduction with a brief overview of the content and structure of the remainder 
of the paper. 

In \cref{sec:2}, we clarify the notation and introduce basic concepts. 
\Cref{sec:3} is then concerned with the question of how a power series 
suitable for the construction of a chattering solution of \eqref{eq:P} can be found. 
As the results of this section are also of interest for other applications, 
we present them in a manner independent of the optimal control context of \eqref{eq:P}.
In \cref{sec:4}, we finally combine the results of 
\cref{sec:3} with the Fourier approach outlined above to arrive 
at an instance of \eqref{eq:P} that indeed gives rise to a chattering 
optimal control. 

\section{Basic notation}
\label{sec:2}

In this paper, we denote 
norms and inner products defined on a real vector space $X$
by $\|\cdot\|_X$ and $(\cdot, \cdot)_X$, respectively. 
For the topological dual space of a normed space $X$,
we use the notation $X^*$, and for convergence in a normed space, the symbol $\to$. 
The natural numbers,  the reals, and the complex numbers
are denoted by the standard symbols $\N$,  $\R$, and $\C$, respectively.
For the absolute value function on $\R$ or $\C$, we write $|\cdot|$.
If $1 \leq q < \infty$ is given, then  $\ell^q$
denotes the vector space of $q$-summable real sequences, endowed with its canonical norm $\|\cdot\|_{\ell^q}$;
see \cite[Section~2.23]{Alt2016}. Given a nonempty and open set 
$\Omega \subset \R^d$, $d \in \N$, we further use the standard notation 
$L^q(\Omega)$, $W^{k,q}(\Omega)$, and $H^k(\Omega)$, $1 \leq q \leq \infty$, $k \in \N$,
for the Lebesgue and Sobolev spaces on $\Omega$, again equipped with their 
usual norms; see \cite[Chapter 5]{Evans2010}. 
The space of  functions $v\colon [a,b] \to \R$ of bounded variation on an interval $[a,b]$,
$-\infty < a< b < \infty$, is denoted by $BV([a,b])$. If $T>0$ is given and $X$ is a Banach space, then we 
use the symbols $L^q(0, T; X)$, $W^{k, q}(0, T;X)$, and $H^k(0, T;X)$, $1 \leq q \leq \infty$, $k \in \mathbb{N}$,
to denote the Lebesgue-Bochner and the Sobolev-Bochner spaces on $(0,T)$, as defined in \cite[Section 5.9.2]{Evans2010}. 
For derivatives w.r.t.\ a time variable $t$, we write $\partial_t$, and for derivatives 
w.r.t.\ a spatial variable $x$, we write $\partial_x$. For elements of the Sobolev and Sobolev-Bochner spaces, 
derivatives are understood in the weak sense, as usual. 
When talking, e.g., about the evaluation
of an element of $H^1(0, T;X)$ at $t =0$ or about the evaluation of 
a function $v \in H^1(0,1)$ at $x \in \{0,1\}$, this is always understood in the sense 
of temporal/spatial traces. 

\section{An auxiliary result on power series}
\label{sec:3}

As we will see in \cref{sec:4}, to construct an instance of problem \eqref{eq:P}
with a chattering optimal control, we have to deal with the following question:
\smallskip
\begin{equation}
  \tag{Q}\label{eq:Q}
  \left.
  \parbox{\dimexpr\linewidth-6em}{%
    \strut
    \emph{%
    Given a (not necessarily increasing) sequence 
    $\{\alpha_m\}_{m \in \mathbb{N}} \subset \mathbb{N}$
    satisfying $\alpha_m \geq m$ for all $m \in \mathbb{N}$,
    is it always possible to find a sequence \mbox{$\{\beta_m\}_{m \in \N} \subset \R$}  such that:
   \begin{enumerate}[label=\roman*)]
    \item  $\{\beta_m\}_{m \in \N} \in \ell^q$ holds for all $q \in (1, \infty)$;
    \item  the power series 
    	\[
		P(z) := \sum_{m=1}^\infty \beta_m z^{\alpha_m}
	\]
	has radius of convergence one and changes its sign infinitely many times on $(0,1)$?
    \end{enumerate}
    }
  }
  ~~
  \right \}
\end{equation}
\smallskip

Note that answering the above question is indeed nontrivial.
In the case $\alpha_m = m^2$ for all $m \in \N$, for example, 
one has $\{\alpha_m^{-1}\}_{m \in \N} \in \ell^1$ in \eqref{eq:Q},
which entails by the theorem of M{\"u}ntz-Sz{\'a}sz that the approximation capabilities 
of the monomials $z^{\alpha_m}$ on $(0,1)$ are very limited; see 
\cite[Section 4.2]{Borwein1995}. Moreover, the summability 
condition $\{\beta_m\}_{m \in \N} \in \ell^q$ for all $q \in (1, \infty)$ implies that 
the coefficient sequence  
$\{\beta_m\}_{m \in \N}$ has to decay rather rapidly for $m \to \infty$.
The main goal of this section is to answer the question \eqref{eq:Q} in the affirmative. 
To this end, we set up an algorithm that, 
given a sequence of exponents $\{\alpha_m\}_{m \in \mathbb{N}} \subset \mathbb{N}$ 
satisfying $\alpha_m \geq m$ for all $m \in \N$,
produces a sequence $\{\beta_m\}_{m \in \N} \subset \R$ such that 
the two conditions in \eqref{eq:Q} are met. 
We remark that this constructive procedure also allows 
to determine the coefficients $\beta_m$ numerically and, thus, 
to explicitly compute examples of desired states $y_d$ that give rise to chattering 
controls in \cref{sec:4}; see \Cref{fig:1,fig:2}  and \cref{tab:1} below.
As the results of this section may also be of interest for other 
applications, we formulate them independently of the optimal control context of \eqref{eq:P}.

The main idea of our construction is to 
pit the convergence of the geometric series against the divergence 
of the  harmonic one. 
More precisely, we set up our algorithm such that 
it produces a sequence of coefficients 
 $\{\beta_m\}_{m \in \N} \subset \R$ that 
 is of the following form for suitable 
 numbers $r_1 := 1 < r_2 < r_3 < \ldots$ with $r_k \in \N$:
 \begin{equation}
 \label{eq:bmstructure}
 1 , 0, \ldots,  0, 
 -\frac{1}{2}, -\frac{1}{3}, \ldots, - \frac{1}{r_2},
 0, \ldots, 0, 
 \frac{1}{r_2 + 1},\ldots, \frac{1}{r_3},
 0, \ldots, 0, 
 - \frac{1}{r_3+1}, \ldots
 \end{equation}
That is, $\{\beta_m\}_{m \in \N} $ is
a harmonic sequence which is split into blocks that are separated by zeros
and endowed with alternating signs. In what follows, 
$r_k \in \N$ always stands for
the absolute value of the 
reciprocal of the last number in the $k$-th nonzero block in this structure.
We further denote by 
$p_k \in  \mathbb{N}$ the sequence index of 
$\{\beta_m\}_{m \in \N} $ at which the $k$-th nonzero 
block starts and 
by $q_k \in \N$ the sequence index of $\{\beta_m\}_{m \in \N}$
at which the $k$-th nonzero block ends. 
Note that, in \eqref{eq:bmstructure}, this means that we have 
$p_1 = q_1 = r_1 = 1$, $p_2 =  N_1 +2$, and 
$q_2 = r_2 + N_1$,
where $N_1$ is the number of zeros in the first zero block. 

The main difficulty that arises when following the above lines 
is to set up the block structure in \eqref{eq:bmstructure} such that 
the power series $P$ in \eqref{eq:Q} indeed changes its sign infinitely often on $(0,1)$,
even in the presence of rapidly increasing exponents $\alpha_m$. 
The method that we use to accomplish this can be seen in \cref{algo:AlgoHarmonic}.
\vspace{0.2cm}

\begin{Algorithm}[Algorithmic solution of \eqref{eq:Q}]\label{algo:AlgoHarmonic}~
\begin{algorithmic}[1]
  \STATE Input:  $z_1 \in (0,1)$ and $\{\alpha_m\}_{m \in \mathbb{N}} \subset \mathbb{N}$ satisfying $\alpha_m \geq m$ for all $m \in \mathbb{N}$.
  \STATE\label{algo:step:2} Define $p_1 := 1$, $q_1 := 1$, $r_1 := 1$, $\beta_1 := 1$.
    \FOR{$k=1,2,3,4,\ldots$}  \label{algo:step:3}
    \STATE\label{algo:step:4}  Choose $p_{k+1} \in \N$ such that $p_{k+1} > q_k$ holds and 
\begin{equation}
\label{eq:randomeq363738}
\left |
\sum_{m=1}^{q_k} \beta_m  z_k^{\alpha_m} 
\right |
> 
 \sum_{m=p_{k+1}}^{\infty}   z_k^{\alpha_m} 
 > 0.
\end{equation}
\STATE\label{algo:step:5} Set $\beta_m := 0$ for all $m= q_k + 1,\ldots , p_{k+1} - 1$. 
\STATE\label{algo:step:6} Choose $r_{k+1} \in \N$ such that $r_{k+1} > r_k + 1$ holds and 
\begin{equation}
\label{eq:randomeq464646}
\left |
\sum_{m=1}^{q_k} \beta_m \right |
< 
\sum_{m=r_k + 1}^{r_{k+1}} \frac{1}{m}.
\end{equation}
\STATE\label{algo:step:7} Choose $z_{k+1} \in (z_k , 1)$ such that 
\begin{equation}
\label{eq:randomeq37eg378}
\left |
\sum_{m=1}^{q_k} \beta_m z_{k+1}^{\alpha_m}
\right |
< 
\sum_{m=r_k + 1}^{r_{k+1}} \frac{1}{m}z_{k+1}^{\alpha_{m + p_{k+1} - r_k - 1}}.
\end{equation}
\STATE\label{algo:step:8} Set $q_{k+1} := p_{k+1} + r_{k+1} - r_k - 1$. 
\STATE\label{algo:step:9} Define 
\begin{equation}
\label{eq:randomeq237dwws378}
\beta_{i + p_{k+1} - 1}
:=
\begin{cases}
\displaystyle
\frac{1}{r_k + i} & \text{ for } i=1,\ldots , r_{k+1} - r_k \text{ if } k+1 \text{ is odd},
\\[0.35cm]
\displaystyle
\frac{- 1}{r_k + i} & \text{ for } i=1,\ldots , r_{k+1} - r_k \text{ if } k+1 \text{ is even}.
 \end{cases}
\end{equation}
    \ENDFOR
\end{algorithmic}
\end{Algorithm}

Before we demonstrate that the above procedure
indeed provides a solution for problem \eqref{eq:Q},
we prove that \cref{algo:AlgoHarmonic} is executable and analyze the behavior of its iterates. 

\begin{lemma}[{Executability of \cref{algo:AlgoHarmonic} and properties of iterates}]
\label{lem:wellAlgo}
Let  $\{\alpha_m\}_{m \in \mathbb{N}} \subset \mathbb{N}$ be a sequence satisfying $\alpha_m \geq m$ for all $m \in \mathbb{N}$ and let $z_1 \in (0,1)$ be given.
Then all steps in \cref{algo:AlgoHarmonic} can be executed and the produced sequences 
\begin{gather*}
\{\beta_m\}_{m \in \mathbb{N}} \subset \mathbb{R},\qquad
\{z_k\}_{k \in \mathbb{N}} \subset \mathbb{R}, 
\qquad 
\text{and}
\qquad
\{p_k\}_{k \in \mathbb{N}},
\{q_k\}_{k \in \mathbb{N}},
\{r_k\}_{k \in \mathbb{N}} \subset \mathbb{N}
\end{gather*}
satisfy the following conditions for all $k \in \N$: 
   \begin{enumerate}[label=\roman*)]
\item\label{lem:wellAlgo:item:1} 
$z_k \in (0,1)$;
\item\label{lem:wellAlgo:item:2} 
 $z_{k} > z_{k-1}$ if  $k \geq 2$;
\item\label{lem:wellAlgo:item:3} 
$p_{k} > q_{k-1}$ if  $k \geq 2$; 
\item\label{lem:wellAlgo:item:4} 
 $q_{k} > p_k$ if $k \geq 2$;
\item\label{lem:wellAlgo:item:5} 
 $r_{k} > r_{k-1}$ if  $k \geq 2$;
\item\label{lem:wellAlgo:item:6} 
 $\beta_m \in [-1,1]$ for all $m=1,\ldots, q_k$;
\item\label{lem:wellAlgo:item:7} 
 $\beta_m = 0$ for all $q_{k-1} < m < p_{k}$ if $k \geq 2$;
\item\label{lem:wellAlgo:item:8} 
\[
\sum_{m=1}^{q_k} \left | \beta_m \right |^\gamma = \sum_{m=1}^{r_k} \left (  \frac1m \right )^\gamma 
\qquad 
\forall \gamma \in [1,\infty);
\]
\item\label{lem:wellAlgo:item:9} 
\[
\sum_{m=1}^{q_k} \beta_m  z_k^{\alpha_m} 
\begin{cases}
> 0 & \text{ if } k \text{ is odd,}
\\
< 0 & \text{ if } k \text{ is even;}
\end{cases}
\]
\item\label{lem:wellAlgo:item:10} 
\[
\left |
\sum_{m=1}^{q_{k-1}} \beta_m  z_{k-1}^{\alpha_m} 
\right |
> 
 \sum_{m=p_k}^{\infty} z_{k-1}^{\alpha_m} 
\qquad 
\text{ if } k \geq 2.
\]
\end{enumerate}
\end{lemma}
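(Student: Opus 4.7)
The plan is to proceed by induction on $k$, at each step showing that the $k$-th iteration of the loop is executable and that the newly produced values $p_{k+1}, q_{k+1}, r_{k+1}, z_{k+1}$, and $\beta_m$ for $m \leq q_{k+1}$ satisfy properties \ref{lem:wellAlgo:item:1}--\ref{lem:wellAlgo:item:10}. The base case $k=1$ is immediate: line 2 and the hypothesis $z_1 \in (0,1)$ give \ref{lem:wellAlgo:item:1} and \ref{lem:wellAlgo:item:6}--\ref{lem:wellAlgo:item:9}, while \ref{lem:wellAlgo:item:2}--\ref{lem:wellAlgo:item:5} and \ref{lem:wellAlgo:item:10} are vacuous.

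For the inductive step, assume \ref{lem:wellAlgo:item:1}--\ref{lem:wellAlgo:item:10} hold through index $k$. Line 4 is executable because the left side of \eqref{eq:randomeq363738} is strictly positive by \ref{lem:wellAlgo:item:9}, whereas $\sum_{m=p}^\infty z_k^{\alpha_m} \leq \sum_{m=p}^\infty z_k^m \to 0$ as $p \to \infty$ (using $\alpha_m \geq m$ and $z_k \in (0,1)$); hence any sufficiently large $p_{k+1} > q_k$ satisfies \eqref{eq:randomeq363738}, yielding \ref{lem:wellAlgo:item:3} and \ref{lem:wellAlgo:item:10} at index $k+1$, and line 5 then gives \ref{lem:wellAlgo:item:7}. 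Line 6 is possible by divergence of the harmonic series, producing $r_{k+1} > r_k + 1$ satisfying \eqref{eq:randomeq464646}, which via line 8 yields \ref{lem:wellAlgo:item:4} and \ref{lem:wellAlgo:item:5}. Line 7 uses continuity: as $z \to 1^-$ the two sides of \eqref{eq:randomeq37eg378} tend, respectively, to $\bigl|\sum_{m=1}^{q_k}\beta_m\bigr|$ and $\sum_{m=r_k+1}^{r_{k+1}} 1/m$, and by \eqref{eq:randomeq464646} the former is strictly smaller, so \eqref{eq:randomeq37eg378} holds for every $z_{k+1}$ close enough to $1$; restricting to $(z_k,1)$ gives \ref{lem:wellAlgo:item:1} and \ref{lem:wellAlgo:item:2}.

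Line 9 assigns to $\beta_{p_{k+1}}, \ldots, \beta_{q_{k+1}}$ the values $\pm 1/(r_k+1), \ldots, \pm 1/r_{k+1}$, all of modulus at most $1$ and sharing a common sign determined by the parity of $k+1$. Combined with the inductive hypothesis, this yields \ref{lem:wellAlgo:item:6} and, since the nonzero entries among $\beta_1, \ldots, \beta_{q_{k+1}}$ then have absolute values $1, 1/2, \ldots, 1/r_{k+1}$, also \ref{lem:wellAlgo:item:8}. For \ref{lem:wellAlgo:item:9} at index $k+1$, decompose
\[
\sum_{m=1}^{q_{k+1}} \beta_m z_{k+1}^{\alpha_m} = \sum_{m=1}^{q_k} \beta_m z_{k+1}^{\alpha_m} + \sum_{i=1}^{r_{k+1}-r_k} \beta_{p_{k+1}+i-1}\, z_{k+1}^{\alpha_{p_{k+1}+i-1}}.
\]
By line 9 and the substitution $m = r_k + i$, the second sum equals $\sigma \sum_{m=r_k+1}^{r_{k+1}} (1/m)\, z_{k+1}^{\alpha_{m+p_{k+1}-r_k-1}}$ with $\sigma = +1$ if $k+1$ is odd and $\sigma = -1$ otherwise, and \eqref{eq:randomeq37eg378} states exactly that the absolute value of the first sum is strictly less than that of the second. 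Hence the total has sign $\sigma$, which is \ref{lem:wellAlgo:item:9}.

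The principal obstacle is the two-stage coupling in lines 6 and 7: because $z_{k+1}$ is not yet available when $r_{k+1}$ is selected, one cannot compare the new block directly against the previous partial sum evaluated at $z_{k+1}$. The algorithm circumvents this by first exploiting divergence of $\sum 1/m$ to dominate the \emph{unweighted} quantity $|\sum_{m=1}^{q_k} \beta_m|$ through the choice of $r_{k+1}$, and then picking $z_{k+1}$ sufficiently close to $1$ so that this strict inequality survives weighting by the appropriate powers of $z_{k+1}$; verifying that this still leaves room for $z_{k+1}\in (z_k,1)$ is routine but is the one point where the construction is genuinely delicate.
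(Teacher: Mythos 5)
Your proposal is correct and follows essentially the same route as the paper's own proof: induction on $k$, with line 4 justified by the geometric-series tail estimate and part \ref{lem:wellAlgo:item:9}, line 6 by divergence of the harmonic series, line 7 by continuity as $z\to 1^-$ together with \eqref{eq:randomeq464646}, and parts \ref{lem:wellAlgo:item:8}--\ref{lem:wellAlgo:item:9} for $k+1$ by the same block decomposition and comparison via \eqref{eq:randomeq37eg378}. Your closing remark about the two-stage coupling of lines 6 and 7 accurately identifies the one genuinely delicate point; no gap.
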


\begin{proof}
We prove by induction w.r.t.\ 
$k \in \N$ that the steps in \cref{algo:AlgoHarmonic}
can be executed and that the produced sequences 
satisfy \ref{lem:wellAlgo:item:1}-\ref{lem:wellAlgo:item:10}.
\smallskip

\emph{Base step $k=1$:} 
For $k=1$, $z_1 \in (0,1)$ is an input datum
of  \cref{algo:AlgoHarmonic}
and we obtain from 
\cref{algo:step:2} of the algorithm
that $p_1 = q_1 = r_1 = \beta_1 = 1$ holds. 
Note that this 
defines the values $\beta_m$ for all $m=1,\ldots, q_1$.
That the conditions 
$\beta_m \in \R$  for all $m=1,\ldots, q_1$,
 $z_1 \in \R$, 
and $p_1, q_1, r_1 \in \N$ are satisfied here is obvious. 
We further immediately see that 
\ref{lem:wellAlgo:item:1},
\ref{lem:wellAlgo:item:6},
\ref{lem:wellAlgo:item:8}, and 
\ref{lem:wellAlgo:item:9}
hold for these numbers and $k=1$. 
As the remaining conditions 
\ref{lem:wellAlgo:item:2},
\ref{lem:wellAlgo:item:3},
\ref{lem:wellAlgo:item:4},
\ref{lem:wellAlgo:item:5},
\ref{lem:wellAlgo:item:7},
and 
\ref{lem:wellAlgo:item:10}
are only for $k\geq 2$, this concludes the base step of the induction proof. 
\smallskip

\emph{Induction step $k \mapsto k+1$:}
Let $k \in \N$ be arbitrary, 
suppose that \cref{algo:AlgoHarmonic} has generated
numbers $z_l \in \R$, $p_l, q_l, r_l \in \N$ for $l=1,\ldots, k$
and $\beta_m \in \R$
for $m=1,\ldots, q_k$ that satisfy 
\ref{lem:wellAlgo:item:1}-\ref{lem:wellAlgo:item:10},
and assume that we now enter the for-loop in \cref{algo:step:3}
of the algorithm with counter $k$. Then
it follows from our assumption $\alpha_m \geq m$ for all $m \in \N$,
the validity of \ref{lem:wellAlgo:item:1} for $k$, and the convergence of the 
geometric series that 
\[
	0 <
	\sum_{m=M}^\infty z_k^{\alpha_m} 
	\leq
	\sum_{m=M}^\infty z_k^{m} \to 0
\]
holds for $M \to \infty$. Since \ref{lem:wellAlgo:item:9} for $k$ implies 
\[
\left | \sum_{m=1}^{q_k} \beta_m z_k^{\alpha_m}  \right | > 0,
\]
this shows that we can find a number $p_{k+1} \in \N$
which satisfies $p_{k+1} > q_k$  and \eqref{eq:randomeq363738}.
In particular, \cref{algo:step:4} of 
\cref{algo:AlgoHarmonic}  can be executed for the index $k$. 
Note that the above choice of $p_{k+1}$ also ensures
\ref{lem:wellAlgo:item:3}
and 
\ref{lem:wellAlgo:item:10}
for $k+1$. 

In \cref{algo:step:5}, \cref{algo:AlgoHarmonic} 
next sets $\beta_m := 0$ for all $m= q_k + 1,\ldots , p_{k+1} - 1$.  This 
is trivially possible, determines $\beta_m$ up to the index $m= p_{k+1} - 1$,
and ensures that condition \ref{lem:wellAlgo:item:7} holds for $k+1$.

Choosing a number $r_{k+1} \in \N$ 
in the subsequent \cref{algo:step:6} of \cref{algo:AlgoHarmonic} 
such that $r_{k+1} > r_k + 1$ is valid and \eqref{eq:randomeq464646} 
is satisfied works since the divergence of the harmonic series implies 
\begin{equation*}
\sum_{m=r_k + 1}^{M} \frac{1}{m} \to \infty \qquad \text{ for } M \to \infty.
\end{equation*}
The choice of $r_{k+1}$ also ensures \ref{lem:wellAlgo:item:5} for $k+1$. 

Since polynomials are continuous, we clearly have 
\[
\lim_{(0,1) \ni z \to 1}
\sum_{m=1}^{q_k} \beta_m  z ^{\alpha_m}
=
\sum_{m=1}^{q_k} \beta_m 
,\quad
\lim_{(0,1) \ni z \to 1}
\sum_{m=r_k + 1}^{r_{k+1}} \frac{1}{m} z^{\alpha_{m + p_{k+1} - r_k - 1}}
=
\sum_{m=r_k + 1}^{r_{k+1}} \frac{1}{m}.
\]%
In view of \eqref{eq:randomeq464646} 
and the validity of \ref{lem:wellAlgo:item:1} for $k$, this entails that we can 
find a number $z_{k+1} \in (z_k , 1)$ that satisfies
\eqref{eq:randomeq37eg378} in \cref{algo:step:7} of \cref{algo:AlgoHarmonic}.
That \ref{lem:wellAlgo:item:1} and \ref{lem:wellAlgo:item:2} hold for $k+1$
follows immediately from this definition of $z_{k+1}$.

Defining $q_{k+1} := p_{k+1} + r_{k+1} - r_k - 1$ in \cref{algo:step:8}
of \cref{algo:AlgoHarmonic} works without problems and, due to 
the inequality $r_{k+1} > r_k + 1$, ensures \ref{lem:wellAlgo:item:4}  for $k+1$. 

The definition of $\beta_{i + p_{k+1} - 1}$ for $i=1,\ldots, r_{k+1} - r_k$
via \eqref{eq:randomeq237dwws378} in \cref{algo:step:9} is also unproblematic. 
Note that, due to the definition of $q_{k+1}$, \cref{algo:step:5} of the algorithm,
and the induction hypothesis, 
this now determines $\beta_m$ for all $m=1,\ldots, q_{k+1}$.
Further, we trivially have $\beta_m \in [-1,1]$ for all these $m$,
 so that \ref{lem:wellAlgo:item:6}  holds for $k+1$.

In total, we have now proven that the steps 
in \cref{algo:AlgoHarmonic} can be  executed for the index $k$ 
and produce 
new values $p_{k+1}, q_{k+1}, r_{k+1} \in \N$, $z_{k+1} \in \R$,
and $\beta_m \in \R$, $m=q_{k}+1,\ldots, q_{k+1}$,
such that \ref{lem:wellAlgo:item:1}-\ref{lem:wellAlgo:item:7}
and \ref{lem:wellAlgo:item:10} hold for $k+1$.
It remains to establish the validity of
\ref{lem:wellAlgo:item:8} and \ref{lem:wellAlgo:item:9} 
for the newly created numbers. 

To obtain \ref{lem:wellAlgo:item:8} for $k+1$, we note that 
the definitions of the new iterates
and the induction hypothesis yield that, 
for all $\gamma \in [1, \infty)$, we have 
\begin{equation*}
\begin{aligned}
\sum_{m=1}^{q_{k+1}} \left | \beta_m \right |^\gamma
&=
\sum_{m=1}^{q_{k}} \left | \beta_m \right |^\gamma
+
\sum_{m=p_{k+1}}^{q_{k+1}} \left | \beta_m \right |^\gamma
&&\text{(by the definitions in \cref{algo:step:5})}
\\
&=
\sum_{m=1}^{r_k} \left (  \frac1m \right )^\gamma 
+
\sum_{m=p_{k+1}}^{ p_{k+1} + r_{k+1} - r_k - 1} \left | \beta_m \right |^\gamma
&&\text{(by \ref{lem:wellAlgo:item:8} for $k$ and \cref{algo:step:8})}
\\
 &=
\sum_{m=1}^{r_k} \left (  \frac1m \right )^\gamma 
+
\sum_{i=1}^{r_{k+1} - r_k} \left | \beta_{i + p_{k+1} - 1} \right |^\gamma
&&\text{(by shifting the index)}
\\
 &=
\sum_{m=1}^{r_k} \left (  \frac1m \right )^\gamma 
+
\sum_{i=1}^{r_{k+1} - r_k} \left | \frac{1}{r_k + i}  \right |^\gamma
&&\text{(by definition \eqref{eq:randomeq237dwws378})}
\\
 &=
\sum_{m=1}^{r_{k+1}} \left (  \frac1m \right )^\gamma
&&\text{(by shifting the index)}.
\end{aligned}
\end{equation*}
This shows that \ref{lem:wellAlgo:item:8} holds for $k+1$. 

If $k+1$ is an odd number, then we can further use the definitions in 
\cref{algo:AlgoHarmonic} to compute that 
\begin{equation*}
\begin{aligned}
&\sum_{m=1}^{q_{k+1}} \beta_m z_{k+1}^{\alpha_m} 
\\
&=
\sum_{m=1}^{q_{k}} \beta_m z_{k+1}^{\alpha_m} 
+
\sum_{m=p_{k+1}}^{q_{k+1}} \beta_m z_{k+1}^{\alpha_m} 
&&\text{(by \cref{algo:step:5})}
\\
&=
\sum_{m=1}^{q_{k}} \beta_m z_{k+1}^{\alpha_m} 
+
\sum_{m=p_{k+1}}^{p_{k+1} + r_{k+1} - r_k - 1} \beta_m z_{k+1}^{\alpha_m} 
&&\text{(by \cref{algo:step:8})}
\\
&=
\sum_{m=1}^{q_{k}} \beta_m z_{k+1}^{\alpha_m} 
+
\sum_{m=r_k + 1}^{r_{k+1}} \beta_{m + p_{k+1} - r_k - 1} z_{k+1}^{\alpha_{m + p_{k+1} - r_k - 1}} 
&&\text{(by shifting the index)}
\\
&=\sum_{m=1}^{q_{k}} \beta_m z_{k+1}^{\alpha_m} 
+
\sum_{m=r_k + 1}^{r_{k+1}} \frac{1}{m} z_{k+1}^{\alpha_{m + p_{k+1} - r_k - 1}} 
&&\text{(by \eqref{eq:randomeq237dwws378})}
\\
&\geq
\sum_{m=r_k + 1}^{r_{k+1}} \frac{1}{m} z_{k+1}^{\alpha_{m + p_{k+1} - r_k - 1}} 
-
\left |
\sum_{m=1}^{q_k} \beta_m  z_{k+1}^{\alpha_m}
\right |
> 0
&&\text{(by \eqref{eq:randomeq37eg378})}.
\end{aligned}
\end{equation*}
Completely analogously, we also obtain for an even $k+1$ that 
\begin{equation*}
\begin{aligned}
\sum_{m=1}^{q_{k+1}} \beta_m  z_{k+1}^{\alpha_m} 
&=\sum_{m=1}^{q_{k}} \beta_m z_{k+1}^{\alpha_m} 
-
\sum_{m=r_k + 1}^{r_{k+1}} \frac{1}{m} z_{k+1}^{\alpha_{m + p_{k+1} - r_k - 1}} 
\\
&\leq
\left |
\sum_{m=1}^{q_k} \beta_m  z_{k+1}^{\alpha_m}
\right |
-
\sum_{m=r_k + 1}^{r_{k+1}} \frac{1}{m} z_{k+1}^{\alpha_{m + p_{k+1} - r_k - 1}} 
 < 0.
\end{aligned}
\end{equation*}
This establishes the validity of \ref{lem:wellAlgo:item:9} 
for $k+1$ and completes the induction proof.
\end{proof}

With \cref{lem:wellAlgo} at hand, we can answer question \eqref{eq:Q}.
\smallskip

\begin{theorem}[{Solution of \eqref{eq:Q}}]
\label{th:power_series}
Let
$\{\alpha_m\}_{m \in \mathbb{N}} \subset \mathbb{N}$ be a sequence 
that satisfies $\alpha_m \geq m$ for all $m \in \mathbb{N}$ and let $z_1 \in (0,1)$ be given. 
Let  $\{\beta_m\}_{m \in \N} \subset \R$, $\{z _k\}_{k \in \N} \subset \R$,
and $\{q_k\}_{k \in \mathbb{N}} \subset \N$
be sequences that have been generated by \cref{algo:AlgoHarmonic}
with input data $\{\alpha_m\}_{m \in \mathbb{N}}$ and $z_1$. Define 
\begin{equation}
\label{eq:PdefPl}
P(z) := \sum_{m=1}^\infty \beta_m z^{\alpha_m}
\qquad
\text{and}
\qquad
P_L(z)
:= 
 \sum_{m=1}^{q_L} \beta_m z^{\alpha_m}
~~
\text{for}
~~
L \in \N.
\end{equation}
Then the following statements are true:
   \begin{enumerate}[label=\roman*)]
\item\label{th:power_series:i} 
It holds
$0 <   z_k <   z_{k+1} < 1$ for all $k \in \N$ and $  z_k \to 1 $ for $k \to \infty$.
\item\label{th:power_series:ii} 
It holds $\beta_m \in [-1,1]$ for all $m \in \N$ and 
\begin{equation*}
	\sum_{m=1}^{\infty} 
	\left | \beta_m \right |^\gamma 
	= 
	\sum_{m=1}^{\infty} \left (  \frac1m \right )^\gamma < \infty
	\qquad 
	\forall \gamma \in (1,\infty).
\end{equation*}
\item\label{th:power_series:iii} The power series $P$
has  radius of convergence one and it holds $P(z_k) > 0$ for all $k \in \N$ that are odd
and $P(z_k) < 0$ for all $k \in \N$ that are even.
\item\label{th:power_series:iv} For all  $L \in \N$, it holds $P_L(z_k) > 0$ for all $k=1,\ldots, L$ that are odd
and $P_L(z_k) < 0$ for all $k=1,\ldots, L$ that are even.
\end{enumerate}
\end{theorem}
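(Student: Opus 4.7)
Most of the work has already been done in Lemma \ref{lem:wellAlgo}; the theorem essentially consists of passing to limits in its items and assembling them. I would treat the four parts in turn.

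Parts \ref{th:power_series:i} and \ref{th:power_series:ii} are essentially immediate from Lemma \ref{lem:wellAlgo}. For \ref{th:power_series:i}, items \ref{lem:wellAlgo:item:1} and \ref{lem:wellAlgo:item:2} give that $\{z_k\}$ is strictly increasing in $(0,1)$, with limit $z^\star\in(0,1]$; to force $z^\star = 1$, I would note that the continuity argument used for \cref{algo:step:7} in the proof of Lemma \ref{lem:wellAlgo} actually produces an entire interval of admissible values of the form $(1-\delta_k,1)$ for $z_{k+1}$, so without loss of generality one may additionally require $z_{k+1}\geq 1-2^{-k}$ in that step. For \ref{th:power_series:ii}, items \ref{lem:wellAlgo:item:3} and \ref{lem:wellAlgo:item:4} yield that $\{q_k\}$ strictly increases to infinity, so every $m\in\N$ eventually lies in $\{1,\ldots,q_k\}$ and item \ref{lem:wellAlgo:item:6} gives $|\beta_m|\leq 1$ for all $m\in\N$; the $\ell^\gamma$-identity then follows from monotone convergence applied to item \ref{lem:wellAlgo:item:8}, using $r_k\to\infty$ from item \ref{lem:wellAlgo:item:5}, and finiteness for $\gamma\in(1,\infty)$ is the classical $p$-series.

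For part \ref{th:power_series:iii}, the key observation for the radius of convergence is that, by construction, the nonzero $\beta_m$'s are precisely $\pm 1,\pm 1/2,\pm 1/3,\ldots$ in order of decreasing magnitude; hence for each $j\in\N$ there is a unique index $m_j\geq j$ with $|\beta_{m_j}|=1/j$. For $|z|<1$, absolute convergence follows from $\sum_m |\beta_m||z|^{\alpha_m}\leq \sum_m |z|^m<\infty$, using $|\beta_m|\leq 1$ and $\alpha_m\geq m$; for $|z|>1$, the estimate $|\beta_{m_j}||z|^{\alpha_{m_j}}\geq (1/j)|z|^j\to\infty$ shows that the terms do not tend to zero, so $P$ diverges. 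This gives $R=1$. For the sign at $z_k$, I would use item \ref{lem:wellAlgo:item:7} to identify
\[
P(z_k)-P_k(z_k)=\sum_{m=p_{k+1}}^\infty \beta_m z_k^{\alpha_m},
\]
bound the modulus of this tail by $\sum_{m=p_{k+1}}^\infty z_k^{\alpha_m}$ via $|\beta_m|\leq 1$, and then invoke item \ref{lem:wellAlgo:item:10} to show that this quantity is strictly smaller than $|P_k(z_k)|$. The parity-dependent sign of $P_k(z_k)$ provided by item \ref{lem:wellAlgo:item:9} then transfers to $P(z_k)$.

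Part \ref{th:power_series:iv} follows from exactly the same tail estimate: for $k\leq L$, $P_L(z_k)-P_k(z_k)$ is a \emph{sub}sum of $\sum_{m=p_{k+1}}^\infty \beta_m z_k^{\alpha_m}$, hence its modulus is again bounded by $\sum_{m=p_{k+1}}^\infty z_k^{\alpha_m}<|P_k(z_k)|$ thanks to item \ref{lem:wellAlgo:item:10}, and the sign of $P_k(z_k)$ transfers to $P_L(z_k)$. I do not anticipate a real conceptual obstacle; the main issue is simply the bookkeeping of the block structure and the indices $p_k,q_k,r_k$, together with the recognition that inequality \eqref{eq:randomeq363738} in \cref{algo:step:4} has been engineered precisely to give the tail-control needed in parts \ref{th:power_series:iii} and \ref{th:power_series:iv}.
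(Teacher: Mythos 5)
Your treatment of parts \ref{th:power_series:ii}, the sign statement in \ref{th:power_series:iii}, and part \ref{th:power_series:iv} is correct and follows essentially the same route as the paper's: isolate the tail $\sum_{m\geq p_{k+1}}\beta_m z_k^{\alpha_m}$ via \cref{lem:wellAlgo}\ref{lem:wellAlgo:item:7}, bound it in modulus by $\sum_{m\geq p_{k+1}}z_k^{\alpha_m}$ using \ref{lem:wellAlgo:item:6}, and invoke \ref{lem:wellAlgo:item:9} and \ref{lem:wellAlgo:item:10}. Your argument that the radius of convergence is at most one — choosing the indices $m_j\geq j$ of the nonzero coefficients $\pm 1/j$ and noting that $(1/j)|z|^j\to\infty$ for $|z|>1$ — is a direct and perfectly valid alternative to the paper's, which instead re-uses the identity theorem at the end of the argument for part~\ref{th:power_series:i}.

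However, there is a genuine gap in your proof of part \ref{th:power_series:i}. You establish that $\{z_k\}$ is strictly increasing with a limit $z^\star\in(0,1]$, but then you \emph{modify the algorithm} (``WLOG require $z_{k+1}\geq 1-2^{-k}$'') to force $z^\star=1$. This does not prove the theorem as stated: \cref{th:power_series} asserts $z_k\to1$ for \emph{any} sequences produced by \cref{algo:AlgoHarmonic}, whose \cref{algo:step:7} merely requires $z_{k+1}\in(z_k,1)$ to satisfy \eqref{eq:randomeq37eg378}, with no lower bound of the form $1-2^{-k}$ imposed. Someone running the algorithm with other admissible choices is still covered by the theorem, and your argument says nothing about that case. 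The paper closes this gap by exploiting analyticity: from the sign changes $P(z_k)P(z_{k+1})<0$ and the intermediate value theorem, $P$ has a zero $\bar z_k\in(z_k,z_{k+1})$ for every $k$; the monotone bounded sequence $\{\bar z_k\}$ converges to some $a\in(0,1]$; if $a<1$ the zeros would accumulate at an interior point of the disc of analyticity, and the identity theorem would force $P\equiv 0$ on $(-1,1)$, contradicting $P(z_k)\neq 0$. Hence $a=1$, and since $z_k<\bar z_k<z_{k+1}$ you also get $z_k\to 1$. You need this argument (or an equivalent one) — it is not a presentational choice but the substance of part \ref{th:power_series:i}, and it is in fact the point at which the analyticity of $P$ does real work in the proof.
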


\begin{proof}
Let $\{\alpha_m\}_{m \in \mathbb{N}}$
and $z_1$ be as in the statement of the theorem,
and let 
$\{\beta_m\}_{m \in \mathbb{N}} $,
$\{z_k\}_{k \in \mathbb{N}} $,
$\{p_k\}_{k \in \mathbb{N}} $,
and
$\{q_k\}_{k \in \mathbb{N}} $
be sequences 
that have been generated for $\{\alpha_m\}_{m \in \mathbb{N}}$
and $z_1$ by means of
\cref{algo:AlgoHarmonic}.
Then 
it follows from \cref{lem:wellAlgo}
that 
$\{\beta_m\}_{m \in \mathbb{N}}$ 
has the properties in \ref{th:power_series:ii}
and that 
$\{z_k\}_{k \in \mathbb{N}} $
satisfies 
$0 <   z_k <   z_{k+1} < 1$ for all $k \in \N$.
From the inclusion 
$\beta_m \in [-1,1]$ for all $m \in \N$,
the inequality $\alpha_m \geq m$,
and the properties of the geometric series, 
we further obtain that $P(z)$ has radius of convergence at least one. 
In particular, $P$ is holomorphic in 
$\{z \in \C \mid |z| < 1\}$ and smooth on $(-1,1)$. 
Suppose now that $k \in \N$ is odd. 
Then \cref{lem:wellAlgo} implies
\begin{equation}
\label{eq:randomeq:P282hw83}
\begin{aligned}
P(z_k) = 
\sum_{m=1}^\infty \beta_m z_k^{\alpha_m}
&=
\sum_{m=1}^{q_k} \beta_m z_k^{\alpha_m}
+
\sum_{m=p_{k+1}}^{\infty} \beta_m z_k^{\alpha_m}
&&
\text{(by \cref{lem:wellAlgo}\ref{lem:wellAlgo:item:7})}
\\
 &\geq 
\sum_{m=1}^{q_k} \beta_m z_k^{\alpha_m}
-
\sum_{m=p_{k+1}}^{\infty} z_k^{\alpha_m}
&&\text{(by \cref{lem:wellAlgo}\ref{lem:wellAlgo:item:1}\ref{lem:wellAlgo:item:6})}
\\
 &=
\left |\sum_{m=1}^{q_k} \beta_m z_k^{\alpha_m}\right |
-
\sum_{m=p_{k+1}}^{\infty} z_k^{\alpha_m}
&&\text{(by \cref{lem:wellAlgo}\ref{lem:wellAlgo:item:9})}
\\
& >0
&&\text{(by \cref{lem:wellAlgo}\ref{lem:wellAlgo:item:10})}.
\end{aligned}
\end{equation}
Completely analogously, we also obtain for all even $k \in \N$ that
\begin{equation}
\label{eq:randomeq:P282hw83-2}
\begin{aligned}
P(z_k) = 
\sum_{m=1}^\infty \beta_m z_k^{\alpha_m}
&=
\sum_{m=1}^{q_k} \beta_m z_k^{\alpha_m}
+
\sum_{m=p_{k+1}}^{\infty} \beta_m z_k^{\alpha_m}
&&\text{(by \cref{lem:wellAlgo}\ref{lem:wellAlgo:item:7})}
\\
 &\leq 
\sum_{m=1}^{q_k} \beta_m z_k^{\alpha_m}
+
\sum_{m=p_{k+1}}^{\infty} z_k^{\alpha_m}
&&\text{(by \cref{lem:wellAlgo}\ref{lem:wellAlgo:item:1}\ref{lem:wellAlgo:item:6})}
\\
 &=
 - \left |\sum_{m=1}^{q_k} \beta_m z_k^{\alpha_m}\right |
+
\sum_{m=p_{k+1}}^{\infty} z_k^{\alpha_m}
&&\text{(by \cref{lem:wellAlgo}\ref{lem:wellAlgo:item:9})}
\\
& <0
&&\text{(by \cref{lem:wellAlgo}\ref{lem:wellAlgo:item:10})}.
\end{aligned}
\end{equation}
Due to the continuity of $P$ on $(-1,1)$ and 
$0 < z_k < z_{k+1} < 1$ for all $k \in \N$,
the above change of sign implies that 
$P$ has at least one zero $\bar z_k$ in every interval $(z_k, z_{k+1})$.
Due to the properties of $\{z_k\}_{k \in \mathbb{N}} $, 
this sequence of zeros satisfies 
$0 < \bar z_k < \bar z_{k+1} < 1$ for all $k \in \N$. 
Further, we necessarily have $\bar z_k \to 1$.
Indeed, as 
$\{\bar z_k\}_{k \in \mathbb{N}} $ is nondecreasing and bounded by one,
we know that there exists $a \in (0,1]$
such that $\bar z_k \to a$ holds for $k \to \infty$.
If $a < 1$ held, then the fact that $P$
is analytic in $\{z \in \C \mid |z| < 1\}$ 
and the identity theorem 
\cite[Result 1.2E.1e)]{Borwein1995}
would imply $P \equiv 0$ in $(-1,1)$,
in contradiction to the inequalities $P(z_k) \neq 0$ for all $k$ derived above.
Thus, $a = 1$ and $\bar z_k \to 1$ follows. 
Note that the same contradiction argument also 
shows that the radius of convergence of $P$ cannot be larger than one. 
As $\bar z_k < z_{k+1} < 1$, it follows that $z_k \to 1$ holds as well. 
This proves the statements 
\ref{th:power_series:i}, \ref{th:power_series:ii}, and \ref{th:power_series:iii} 
of the theorem. 
To obtain \ref{th:power_series:iv},
one can use the exact same arguments as in 
\eqref{eq:randomeq:P282hw83}
and
\eqref{eq:randomeq:P282hw83-2}.
\end{proof}

Note that, due to the use of the harmonic series, we do not get $\{\beta_m\}_{m \in \N} \in \ell^1$
in \cref{th:power_series} and that point \ref{th:power_series:iv} of \cref{th:power_series} 
is very important
for practical applications
as it implies that 
the partial sums $P_L$ accurately reproduce the oscillatory behavior of 
the power series $P$. 
We conclude this section with a numerical experiment
which demonstrates that \cref{algo:AlgoHarmonic} can indeed be implemented
and illustrates how the 
polynomials 
$P_L$ in \cref{th:power_series} behave in practice. 
As a model problem, we consider the situation 
$\alpha_m = m^2$, $m \in \N$,
and $z_1 = 0.5$.

We begin with some remarks on how the steps 
in  \cref{algo:AlgoHarmonic} can be realized 
for this example. 

\begin{enumerate}[label=\roman*)]
\item
To effectively implement
\cref{algo:step:4} 
of \cref{algo:AlgoHarmonic} 
for the sequence $\alpha_m = m^2$, we note that 
\cref{lem:wellAlgo}\ref{lem:wellAlgo:item:1}  and the formula for the geometric series imply
\begin{equation*}
\begin{aligned}
0 &<
 \sum_{m=p_{k+1}}^{\infty}  z_k^{\alpha_m} 
= 
 z_k^{p_{k+1}^2}
  \sum_{m=p_{k+1}}^{\infty}  z_k^{m^2 - p_{k+1}^2} 
  \\
  &\leq
  z_k^{p_{k+1}^2}
  \sum_{m=p_{k+1}}^{\infty} z_k^{m- p_{k+1}} 
   = 
  z_k^{p_{k+1}^2}
  \sum_{m=0}^{\infty} z_k^{m} 
   =
  \frac{\mathrm{e}^{\ln(z_k)p_{k+1}^2}}{1 - z_k}.
 \end{aligned}
\end{equation*}
The above shows that, to guarantee 
$p_{k+1} > q_k$ and \eqref{eq:randomeq363738},
it suffices to define
\begin{equation*}
\qquad
   p_{k+1}
  :=
  \max \left (
  q_k + 1, 
  \left \lfloor
  \sqrt{
  \frac{1}{\ln(z_k)}
  \ln \left [ (1 - z_k)
  \left |
  \sum_{m=1}^{q_k} \beta_m z_k^{\alpha_m} 
  \right |
  \right ]}
  \right \rfloor + 1
  \right ),
\end{equation*}
where $\lfloor \cdot \rfloor$ denotes the operation of rounding down. 

\item For the realization of \cref{algo:step:6}, we recall that harmonic sums can be interpreted as 
Riemann sums for the integration of $f(x) := 1/(x+1)$. This yields 
\begin{equation*}
\begin{aligned}
\sum_{m=r_k + 1}^{r_{k+1}} \frac{1}{m}
&\geq 
\int_{r_k}^{r_{k+1}} \frac{1}{x+1} \dd x
=
\ln(r_{k+1} +1)
-
\ln(r_k +1)
\end{aligned}
\end{equation*}
and allows us to ensure $r_{k+1} > r_k + 1$  and \eqref{eq:randomeq464646} by setting
\begin{equation*}
	r_{k+1}
	:=
	\max
	\left (
	r_k + 2,
	\left \lfloor
	\exp \left ( 
	\left |
	\sum_{m=1}^{q_k} \beta_m 
	\right |
	+
	\ln(r_k+1)
	\right )
	\right \rfloor
	\right ).
\end{equation*}

\item To finally identify a number $z_{k+1} \in (z_k, 1)$
satisfying the condition \eqref{eq:randomeq37eg378} 
in \cref{algo:step:7} of \cref{algo:AlgoHarmonic},
one can use simple interval bisection.  
\end{enumerate}
\smallskip

The sequences 
$\{z_k\}_{k \in \mathbb{N}}$, 
$\{p_k\}_{k \in \mathbb{N}}$,
$\{q_k\}_{k \in \mathbb{N}}$,
and 
$\{r_k\}_{k \in \mathbb{N}}$
and 
polynomials $P_L$
that are generated by 
\cref{algo:AlgoHarmonic} 
for $z_1 = 0.5$  and 
$\alpha_m = m^2$, $m \in \N$,
when the above selection rules are used
can be seen in \cref{tab:1} and \Cref{fig:1} below. 
The obtained results suggest that the 
construction in \cref{algo:AlgoHarmonic} causes 
$\{z_k\}_{k \in \mathbb{N}}$, 
$\{p_k\}_{k \in \mathbb{N}}$,
and
$\{q_k\}_{k \in \mathbb{N}}$
to converge very rapidly to one and infinity, respectively.
In the last line of \cref{tab:1}, one
can further see that only very few of the coefficients $\beta_m$
in the partial sums $P_L$ are nonzero in our experiment.
(Note that the number of nonzero coefficients in $P_L$ is precisely $r_L$ by  \eqref{eq:bmstructure}.) 
Because of this effect, the functions $P_L$ can indeed be worked with, 
despite the very large summation bounds $q_L$ in \eqref{eq:PdefPl}.\medskip

\begin{table}[ht]
\centering
\begin{tabular}{c | c | c | c | c | c | c  }
\hline\noalign{\smallskip}
$k$ & $1$  & $2$  & $3$  &  $4$   & $5$  & $6$ \\
\noalign{\smallskip}\hline\noalign{\smallskip}
$1 - z_k$  & $ 5\cdot 10^{-1} $  & $1.56\cdot 10^{-2}$  & $1.22\cdot 10^{-4}$  &  $1.52\cdot 10^{-5}$   & $2.38\cdot 10^{-7}$  & $7.45\cdot 10^{-9}$  \\[0.1cm]
$p_k$  & $1$  & $2$  & $21$  &  $333$   & $994$  & $9069$  \\[0.1cm]
$q_k$  & $1$  & $5$  & $22$  &  $334$   & $996$  & $9070$     \\[0.1cm]
$r_k$ & $1$  & $5$  & $7$  &  $9$   & $12$  & $14$  \\[0.1cm]
\noalign{\smallskip}\hline
\end{tabular}
\bigskip
\caption{The sequences  
$\{z_k\}_{k \in \mathbb{N}}$, 
$\{p_k\}_{k \in \mathbb{N}}$,
$\{q_k\}_{k \in \mathbb{N}}$,
and 
$\{r_k\}_{k \in \mathbb{N}}$
generated by \cref{algo:AlgoHarmonic} 
for $\alpha_m = m^2$ and $z_1 = 0.5$.
}
\label{tab:1}
\end{table}

\begin{figure}[htp]
\begin{center}
  \includegraphics[width=11cm]{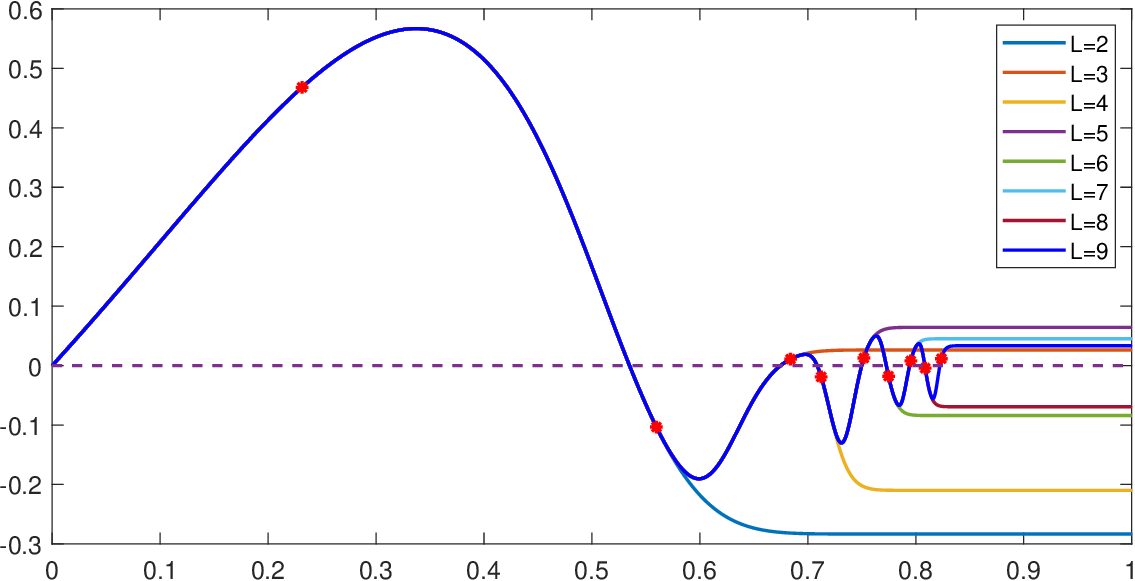}\\
  \caption{The polynomials $P_L$
  generated by \cref{algo:AlgoHarmonic} for 
  $\alpha_m = m^2$ and $z_1 = 0.5$ with $L=2,\ldots, 9$. To properly visualize the behavior near one, 
  the plot shows the graphs of the rescaled functions
  $[0,1] \ni x \mapsto P_L(1 - \exp(1 - (1-x)^{-2})) \in \R$.
  The red dots mark the (rescaled) positions of the points 
  $z_k$, $k=1,\ldots, 9$. The sign-changing behavior from 
  \cref{th:power_series}\ref{th:power_series:iv} is clearly visible. 
  }\label{fig:1}
  \end{center}
\end{figure}

\section{Construction of a bang-bang solution with infinitely many switches}
\label{sec:4}

With
\cref{th:power_series}
at hand, we can turn our attention to the construction of 
an instance of problem \eqref{eq:P} that gives rise to a chattering optimal control.
We begin with some preliminary results.\smallskip

\begin{proposition}[Solvability and first-order optimality conditions]
\label{prop:necsuffoptcond}
Let $T>0$ and $y_d \in L^2(0,1)$ be given. 
Then problem \eqref{eq:P} possesses an optimal control-state pair $(\bar u, \bar y)$.
Further, a tuple $(\bar u, \bar y)$ solves \eqref{eq:P} if and only if there
exists an adjoint state $\bar p$ such that the following system is satisfied:\smallskip
\begin{equation}
\label{eq:optsysP}
\begin{gathered}
\bar y, \bar p \in  L^2(0,T;H^1(0,1)) \cap H^1(0,T;H^1(0,1)^*),
\qquad \bar u \in L^2(0,T), \\[0.05cm]
\begin{aligned}
		\partial_t \bar y - \partial_x^2 \bar y &= 0 &&\hspace{-0.15cm}\text{in } (0,T) \times (0,1), \\
		\partial_x \bar y &= 0 &&\hspace{-0.15cm}\text{on } (0,T) \times \{0\}, \\
		\partial_x \bar y &= \bar u &&\hspace{-0.15cm}\text{on } (0,T) \times \{1\}, \\
		\bar y &= 0  &&\hspace{-0.15cm}\text{on } \{0\} \times (0,1), \\
\end{aligned}
\hspace{1cm}
\begin{aligned}
		-\partial_t \bar p - \partial_x^2 \bar p  &= 0 &&\hspace{-0.15cm}\text{in } (0,T) \times (0,1), \\
		\partial_x \bar p  &= 0 &&\hspace{-0.15cm}\text{on } (0,T) \times \{0 \}, \\
		\partial_x \bar p  &= 0 &&\hspace{-0.15cm}\text{on } (0,T) \times \{1\}, \\
		\bar p &= \bar y - y_d   &&\hspace{-0.15cm}\text{on } \{T\} \times (0,1),
\end{aligned}
\\[0.05cm]
\bar u \in \Uad,\quad 
\left ( \bar p(\cdot, 1), u - \bar u \right )_{L^2(0,T)}
\geq 0
\quad \forall u \in \Uad. \\[0.05cm]
\end{gathered}
\end{equation}
\end{proposition}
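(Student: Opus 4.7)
The plan is to reformulate \eqref{eq:P} as a reduced optimization problem over $\Uad$ and then apply standard convex analysis. First, I would introduce the control-to-state operator $S\colon L^2(0,T) \to W:=L^2(0,T;H^1(0,1)) \cap H^1(0,T;H^1(0,1)^*)$ that sends $u$ to the unique weak solution $y$ of the heat equation with homogeneous initial datum, homogeneous Neumann condition at $x=0$, and Neumann datum $u$ at $x=1$. Well-posedness and linear continuity of $S$ follow from standard linear parabolic theory (Lions--Magenes). Since $W \hookrightarrow C([0,T];L^2(0,1))$ continuously, the terminal evaluation $y \mapsto y(T,\cdot)\in L^2(0,1)$ is linear and continuous, so the reduced functional
\[
j(u) := \tfrac12 \| (Su)(T,\cdot) - y_d \|_{L^2(0,1)}^2
\]
is well-defined, convex, continuous, and even Fr\'echet differentiable on $L^2(0,T)$.

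For existence, I would use the direct method: $\Uad$ is convex, closed, and bounded in $L^2(0,T)$, hence weakly sequentially compact by the Banach--Alaoglu theorem. The functional $j$, being convex and continuous, is weakly sequentially lower semicontinuous, and therefore attains its minimum on $\Uad$. This yields an optimal control $\bar u$ together with $\bar y := S\bar u \in W$.

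For the optimality system, I would invoke convexity: since $j$ is convex and G\^ateaux differentiable and $\Uad$ is convex, $\bar u$ is optimal if and only if
\[
\langle j'(\bar u), u - \bar u \rangle_{L^2(0,T)} \geq 0 \qquad \forall u \in \Uad.
\]
To identify $j'(\bar u)$ with $\bar p(\cdot, 1)$, I would introduce the adjoint state $\bar p$ as the unique solution in $W$ of the backward heat equation with homogeneous Neumann boundary conditions and terminal datum $\bar y(T,\cdot)-y_d \in L^2(0,1)$, whose well-posedness again follows from linear parabolic theory after the time reversal $t \mapsto T-t$. Formal testing of the state equation for an arbitrary direction $h := u - \bar u$ against $\bar p$, combined with testing the adjoint equation against the corresponding state-perturbation $Sh$ and integrating by parts both in time and in space, yields
\[
\langle j'(\bar u), h\rangle_{L^2(0,T)}
= \big( (S\bar u)(T,\cdot) - y_d,\, (Sh)(T,\cdot)\big)_{L^2(0,1)}
= \big(\bar p(\cdot, 1),\, h\big)_{L^2(0,T)},
\]
which delivers the variational inequality in \eqref{eq:optsysP}. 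Since convexity makes the condition both necessary and sufficient, the claimed equivalence follows.

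The only delicate point is making the above integration-by-parts rigorous, because $\bar p(\cdot, 1)$ a priori only lives as a trace. I would handle this by first establishing the identity for smooth data by density and then extending by continuity, using that the spatial-trace operator $H^1(0,1) \to \R$, $v \mapsto v(1)$, induces a continuous operator $L^2(0,T;H^1(0,1)) \to L^2(0,T)$, so that $\bar p(\cdot, 1) \in L^2(0,T)$ and the duality pairing in the variational inequality is well-defined. This step, together with the precise handling of the weak formulations of the state and adjoint equations, is the main technical obstacle; everything else is standard convex-analytic bookkeeping.
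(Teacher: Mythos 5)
Your proposal is correct and takes essentially the same approach as the paper: direct method of the calculus of variations for existence, and adjoint calculus combined with convexity for the characterization of optimality via the variational inequality. The paper simply outsources these standard steps to the cited textbook reference, whereas you spell out the reduced-functional and control-to-state operator framework explicitly, including the trace-regularity point needed to make the duality pairing well-defined.
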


Here and in what follows, the appearing parabolic 
boundary value problems are, of course, understood in the weak sense; cf.\ 
\cite[Section 7.1.1b]{Evans2010}.

\begin{proof}[Proof of \cref{prop:necsuffoptcond}]
The solvability of \eqref{eq:P} is obtained along standard lines with the direct method 
of the calculus of variations, and that every optimal control-state pair $(\bar u, \bar y)$
of \eqref{eq:P}
satisfies \eqref{eq:optsysP} for some $\bar p$ is a straightforward 
consequence of adjoint calculus. That \eqref{eq:optsysP} is 
 sufficient for optimality 
follows from the convexity of \eqref{eq:P}. 
For more details, we refer the reader to 
\cite[Section 3.2.3]{Troeltzsch2010}. 
\end{proof}

As a consequence of \cref{prop:necsuffoptcond}, we obtain the 
already mentioned bang-bang principle.\smallskip

\begin{proposition}[Bang-bang principle]
\label{prop:bang_bang_principle}
Let $T>0$ and $y_d \in L^2(0,1)$ be given. 
Suppose that the optimal objective function value of \eqref{eq:P} is 
positive, i.e., that there is no state $y$ that is admissible for \eqref{eq:P}
and satisfies $y(T, \cdot) = y_d$ a.e.\ in $(0,1)$. Then 
\eqref{eq:P} admits precisely one optimal control $\bar u$,
the adjoint state $\bar p$ associated with this optimal control possesses 
a trace $\bar p(\cdot, 1)\colon [0,T) \to \R$ that is analytic and not identical zero on $[0,T)$, 
and it holds $\bar u = - \sgn(\bar p(\cdot, 1))$ a.e.\ in $(0,T)$. 
In particular, $\bar u$ possesses a (necessarily unique) right-continuous 
representative $\bar u \colon [0,T) \to \R$
and this representative satisfies $ \bar u(t) \in \{-1,1\}$ for all $t \in [0,T)$
and changes its function value at at most countably many points in 
$[0, T)$ which can accumulate at the terminal time T only.
\end{proposition}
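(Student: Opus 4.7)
The plan is to deduce everything from the optimality system in \cref{prop:necsuffoptcond} by analyzing the trace of the adjoint state via Fourier expansion. As a first step, I would translate the variational inequality $(\bar p(\cdot,1), u-\bar u)_{L^2(0,T)} \geq 0$ into its pointwise form: since $u(t)\in[-1,1]$ minimizing $\bar p(t,1)\,u(t)$ is forced to $-\sgn(\bar p(t,1))$ whenever $\bar p(t,1)\neq 0$, one obtains $\bar u(t) = -\sgn(\bar p(t,1))$ almost everywhere on $\{\bar p(\cdot,1)\neq 0\}$. The proposition therefore reduces to proving that $\bar p(\cdot,1)$ is real-analytic on $[0,T)$ and not identically zero.

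For analyticity, I would expand $\bar p(T,\cdot)=\bar y(T,\cdot)-y_d\in L^2(0,1)$ in the Neumann eigenbasis $\{\cos(n\pi x)\}_{n\in\N\cup\{0\}}$ with $\ell^2$-coefficients $\{c_n\}$, and solve the backward heat equation explicitly to obtain
\[
\bar p(t,x) = \sum_{n=0}^{\infty} c_n\, e^{-n^2\pi^2(T-t)}\cos(n\pi x),
\qquad
\bar p(t,1) = \sum_{n=0}^{\infty} c_n (-1)^n e^{-n^2\pi^2(T-t)}.
\]
With $s:=T-t$, the right-hand series converges absolutely and uniformly on every set $\{\mathrm{Re}(s) \geq \varepsilon\}$, $\varepsilon>0$, so it defines a holomorphic function on the right half-plane; in particular $\bar p(\cdot,1)$ is real-analytic on $[0,T)$.

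The main obstacle is ruling out $\bar p(\cdot,1) \equiv 0$, and this is exactly where the positivity of the optimal value is used. Suppose for contradiction that $\bar p(\cdot,1)\equiv 0$ on $[0,T)$; by holomorphic extension the Dirichlet series above vanishes on the full half-plane $\{\mathrm{Re}(s)>0\}$. Letting $s\to\infty$ and using the boundedness of $\{c_n\}$ for dominated convergence yields $c_0=0$; multiplying the remaining series by $e^{\pi^2 s}$ and repeating gives $c_1=0$; an easy induction then eliminates every $c_n$. Hence $\bar p(T,\cdot)=\bar y(T,\cdot)-y_d=0$, which would render $\bar y$ admissible with zero objective value and contradict the standing assumption.

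Since $\bar p(\cdot,1)$ is analytic and nontrivial, its zero set $Z\subset[0,T)$ is discrete, hence at most countable with $T$ as its only possible accumulation point; in particular $|Z|=0$, so $\bar u = -\sgn(\bar p(\cdot,1))$ a.e.\ on $(0,T)$. Uniqueness of $\bar u$ then follows from strict convexity of $\tfrac12\|\cdot-y_d\|_{L^2}^2$ combined with linearity of the control-to-state map (two optimal controls must share the same terminal state $\bar y(T,\cdot)$, hence the same $\bar p$, which via the sign formula pins them down a.e.). Finally, a right-continuous $\{-1,1\}$-valued representative is obtained by keeping $\bar u(t):=-\sgn(\bar p(t,1))$ off $Z$ and assigning, at each $t_0\in Z$, the constant value that $-\sgn(\bar p(\cdot,1))$ takes on a right-sided punctured neighborhood of $t_0$ (well-defined by isolation of zeros). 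Its discontinuity set is precisely $Z$, which has the required countability and accumulation properties.
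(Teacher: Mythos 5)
Your proposal is correct and takes essentially the same route as the paper's (sketch) proof, which expresses $\bar p(\cdot,1)$ via Green's function for the Neumann heat problem and then invokes the identity theorem, the variational inequality, and a contradiction argument; your Fourier/Dirichlet-series representation is just the eigenfunction form of that same Green's function formula (cf.\ \cref{lemma:oscadjoint}), and your uniqueness argument via strict convexity of the terminal tracking term is the standard "simple contradiction argument" the paper alludes to. One small slip: the discontinuity set of the right-continuous representative is \emph{contained in} the zero set $Z$, not "precisely $Z$," since $\bar p(\cdot,1)$ does not change sign at zeros of even order; this is harmless because the proposition only asserts an upper bound on the switching points.
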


\begin{proof}
The proof of the proposition is based on the idea to express the solution $\bar p$
of the adjoint equation in \eqref{eq:optsysP} in terms of 
Green's function for the heat equation with homogeneous Neumann boundary conditions 
and to then establish the analyticity of $\bar p(\cdot, 1)$ on $[0,T)$ by means of the 
resulting convolution formula; cf.\ the derivation of \cref{lemma:oscadjoint} below. 
As the 
assumption that the  optimal objective function value of \eqref{eq:P}
is positive implies that $\bar p(\cdot, 1)$ cannot be identical zero, 
the identity theorem then yields that $\bar p(\cdot, 1)$
can have at most countably many zeros on $[0, T)$ which can 
accumulate at $T$ only. With this information at hand,  it follows 
from the variational 
inequality in \eqref{eq:optsysP} that $\bar u = - \sgn(\bar p(\cdot, 1))$ holds
and the assertions on the mapping behavior and switching structure 
of $\bar u$ are immediately obtained. The uniqueness of $\bar u$ finally 
follows from a simple contradiction argument. For 
a detailed proof of the proposition, we refer the reader to
\cite[Section 3.2.4]{Troeltzsch2010}.
\end{proof}

Note that, if the optimal objective function value of \eqref{eq:P} is zero, then
it follows from \eqref{eq:optsysP} that the adjoint state $\bar p$ vanishes identically 
for all optimal control-state pairs $(\bar u, \bar y)$. 
In this case, the variational inequality in \eqref{eq:optsysP} 
does not contain any information and 
an optimal control $\bar u$ may very well take values in $(-1,1)$.
As a matter of fact, one easily checks that, for every arbitrary but fixed
$\bar u \in \Uad$, there exists $y_d \in L^2(0,1)$ such that 
problem \eqref{eq:P} with 
desired state $y_d$
is solved by $\bar u$ with optimal objective function value zero. 
This means in particular that
it is trivial to construct an instance of \eqref{eq:P}
which has an optimal control that switches infinitely often between 
$-1$ and $1$  if one allows that the desired state $y_d$ can be fitted precisely. 
Constructing such an optimal control is only delicate if one restricts the attention to situations
to which the bang-bang principle in \cref{prop:bang_bang_principle} applies, i.e., 
to cases
for which the optimal objective function value is positive.

To establish that infinitely many switching points are indeed possible 
under the assumptions of \cref{prop:bang_bang_principle}, we use \cref{th:power_series}
to prove the following lemma on the heat equation.
\smallskip

\begin{lemma}[Oscillations for the adjoint equation]
\label{lemma:oscadjoint}
Let $T>0$ and $z_1 \in (0,1)$ be given.
Let 
$\{\beta_m\}_{m \in \N} \subset \R$ and
 $\{z _k\}_{k \in \N} \subset \R$
be sequences that have been generated by \cref{algo:AlgoHarmonic}
with input data $\alpha_m := m^2$, $m \in \N$, and $z_1$. 
Define 
\begin{equation}
\label{eq:w-def}
w(x) := \sum_{m=1}^\infty (-1)^m\beta_m \cos(m \pi x)\qquad \text{f.a.a.\ } x \in (0,1)
\end{equation}
and
\begin{equation}
\label{eq:tk-def}
t_k := \pi^{-2} \ln( z_k ) + T,\qquad k \in \N.
\end{equation}
Then $w$ is in
$L^2(0,1)\setminus \{0\}$,
it holds 
$-\infty < t_k < t_{k+1} < T$ for all $k \in \N$
and 
$t_k \to T$ for $k \to \infty$, 
 the solution $\psi \in  L^2(0,T;H^1(0,1)) \cap H^1(0,T;H^1(0,1)^*)$ of 
\begin{equation}
\label{eq:backheat}
\begin{aligned}
		-\partial_t \psi - \partial_x^2 \psi   &= 0 &&\hspace{-0.15cm}\text{in } (0,T) \times (0,1), \\
		\partial_x \psi   &= 0 &&\hspace{-0.15cm}\text{on } (0,T) \times \{0 \}, \\
		\partial_x \psi   &= 0 &&\hspace{-0.15cm}\text{on } (0,T) \times \{1\}, \\
		\psi  &= w    &&\hspace{-0.15cm}\text{on } \{T\} \times (0,1),
\end{aligned}
\end{equation}
has a trace $\psi(\cdot, 1)$ that is analytic 
on $[0, T)$, and, for all $t_k$ satisfying  $t_k \in (0,T)$, it holds $\psi(t_k, 1) > 0$ 
if $k$ is odd and $\psi(t_k, 1) < 0$ if $k$ is even. 
\end{lemma}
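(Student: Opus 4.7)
The plan is to turn the backward heat equation \eqref{eq:backheat} into a Fourier series in the Neumann eigenbasis of $-\partial_x^2$ on $(0,1)$ and then match the resulting trace at $x=1$ with the power series $P$ from \cref{th:power_series}. First, I would observe that, by \cref{th:power_series}\ref{th:power_series:ii}, $\{\beta_m\}_{m \in \N} \in \ell^2$, so the formal series in \eqref{eq:w-def} converges unconditionally in $L^2(0,1)$ to an element $w$ whose Fourier cosine coefficients are precisely $(-1)^m \beta_m$ (against the orthogonal system $\{\cos(m\pi x)\}_{m \geq 1}$). Since $\beta_1 = 1$, one coefficient is nonzero, so $w \not\equiv 0$. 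The statements about $\{t_k\}$ are immediate from \cref{th:power_series}\ref{th:power_series:i}: $0 < z_k < z_{k+1} < 1$ gives $-\infty < t_k < t_{k+1} < T$, and $z_k \to 1^-$ gives $t_k \to T$.

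Next, I would solve \eqref{eq:backheat} explicitly. The Neumann Laplacian on $(0,1)$ has eigenpairs $(0,1)$ and $(m^2 \pi^2, \cos(m \pi x))$, $m \geq 1$, and substituting $\tau := T - t$ reduces \eqref{eq:backheat} to the standard forward heat equation with initial datum $w$. This yields
\begin{equation*}
\psi(t,x) = \sum_{m=1}^\infty (-1)^m \beta_m \, \e^{-m^2 \pi^2 (T - t)} \cos(m \pi x),
\end{equation*}
with convergence in the required Bochner spaces by the standard energy estimate and the $\ell^2$-summability of $\{\beta_m\}$; the regularity $\psi \in L^2(0,T;H^1(0,1)) \cap H^1(0,T; H^1(0,1)^*)$ follows from this together with parabolic smoothing on $(0,T) \times (0,1)$ (see, e.g., \cite[Section 7.1.1b]{Evans2010}), and uniqueness in this class identifies $\psi$ with the weak solution.

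Evaluating the trace at $x = 1$ and using $\cos(m \pi) = (-1)^m$ collapses the sign factors to
\begin{equation*}
\psi(t,1) = \sum_{m=1}^\infty \beta_m \, \e^{-m^2 \pi^2 (T-t)} = \sum_{m=1}^\infty \beta_m z(t)^{m^2}, \qquad z(t) := \e^{-\pi^2(T-t)}.
\end{equation*}
For $t \in [0,T)$ one has $z(t) \in [\e^{-\pi^2 T}, 1)$, which lies strictly inside the radius of convergence of the power series $P$ from \cref{th:power_series}, so $\psi(\cdot, 1) = P \circ z$ is a composition of two real-analytic maps on $[0,T)$ and hence analytic there. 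Finally, the definition \eqref{eq:tk-def} gives exactly $z(t_k) = \e^{-\pi^2 (T - t_k)} = \e^{\ln(z_k)} = z_k$, so
\begin{equation*}
\psi(t_k, 1) = P(z_k),
\end{equation*}
and \cref{th:power_series}\ref{th:power_series:iii} yields the asserted sign pattern for those $t_k \in (0,T)$.

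The main obstacle is the first step, namely verifying that the formally written series for $\psi$ in fact belongs to the Bochner spaces required by \eqref{eq:backheat}; the sign-changing conclusion itself is then just a bookkeeping exercise that identifies $\psi(t_k, 1)$ with $P(z_k)$ via the exponential change of variable $z(t) = \e^{-\pi^2(T-t)}$, after which everything reduces to \cref{th:power_series}.
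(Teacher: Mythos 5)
Your proposal is correct and follows essentially the same route as the paper: both reduce $\psi(t,1)$ to $P(\e^{\pi^2(t-T)})$ and then invoke \cref{th:power_series}\ref{th:power_series:iii} for the sign pattern. Expanding the solution directly in the Neumann eigenbasis, as you do, is the same computation as the paper's Green's-function route (since $G$ is exactly the kernel of that expansion), and your shortcut for $w \neq 0$ via $\beta_1 = 1$ is equally valid, where the paper instead computes $\|w\|_{L^2(0,1)}^2 = \pi^2/12$ via Parseval.
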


\begin{proof}
Recall that the addition formula for the cosine implies
\begin{equation}
\label{eq:cos_ortho}
\begin{aligned}
\int_0^1 \cos(m \pi x ) \cos(n \pi x) \dd x
=
\begin{cases}
0 & \text{ if } n \neq m,
\\[0.2cm]
\displaystyle
\frac12  & \text{ if } n=m,
\end{cases}
\qquad 
\forall m,n \in \N.
\end{aligned}
\end{equation}
In view of 
 \cref{th:power_series}\ref{th:power_series:ii},
 this yields $w \in L^2(0,1)$ and 
\[
\|w\|_{L^2(0,1)}^2 = 
\sum_{m=1}^{\infty} \frac{1}{2} \left | \beta_m \right |^2
=
\frac{1}{2}  \sum_{m=1}^{\infty} \left (  \frac1m \right )^2
= 
\frac{\pi^2}{12}
> 0
\]
 by Parseval's identity. 
 That  $-\infty < t_k < t_{k+1} < T$ holds for all $k$
and that $t_k \to T$ for $k \to \infty$ follows 
immediately from \cref{th:power_series}\ref{th:power_series:i} and \eqref{eq:tk-def}.
Consider now the problem \eqref{eq:backheat} with terminal datum $w \in L^2(0,1)$. 
From the results in \cite[Section~3.2.2]{Troeltzsch2010}, we obtain that the weak 
solution $\psi \in  L^2(0,T;H^1(0,1)) \cap H^1(0,T;H^1(0,1)^*)$
of this reversed heat equation is given by 
\[
	\psi(t, x)
	=
	\int_0^1
	G(x, \xi, T - t) w(\xi) \dd \xi,
\]
where $G$ denotes Green's function for the heat equation with homogeneous
Neumann boundary conditions, i.e., 
\[
G(x, \xi, s) :=
1 + 2 \sum_{n=1}^\infty \cos(n \pi x)
\cos(n \pi \xi) \e^{-n^2 \pi^2 s}.
\]
If we use \eqref{eq:w-def} and again invoke \eqref{eq:cos_ortho}, 
then this provides us with 
\begin{equation}
\label{eq:randomeq36363}
\begin{aligned}
\psi(t, 1) &= \int_0^1 G(1, \xi, T-t) w(\xi) \dd \xi
\\
&=
\int_0^1 \left (
1 + 2 \sum_{n=1}^\infty (-1)^n
\cos(n\pi \xi) \e^{-n^2 \pi^2 (T - t)}
\right )
\left ( 
\sum_{m=1}^\infty (-1)^m\beta_m \cos(m \pi \xi)
\right ) \dd \xi
\\
 &= 
 \sum_{m=1}^\infty   \beta_m  \e^{-m^2 \pi^2 (T - t)}
 \\
&= P \left (  \e^{\pi^2 (t - T)} \right )
 \qquad \forall t \in [0,T),
\end{aligned}
\end{equation}
where $P$ denotes the power series 
from \cref{th:power_series}
associated with the exponents $\alpha_m = m^2$, $m \in \N$.
The analyticity of the trace $\psi(\cdot, 1)$ 
on $[0, T)$ follows immediately from the formula in \eqref{eq:randomeq36363}. 
As $P$ satisfies $P( z_k) > 0$ for all $k \in \N$ that are odd
and $P( z_k) < 0$ for all $k \in \N$ that are even by \cref{th:power_series}\ref{th:power_series:iii}, 
we further obtain from \eqref{eq:tk-def} that $\psi(\cdot, 1)$
switches its sign as asserted. This completes the proof.  
\end{proof}

\begin{remark}
If, in the situation of \cref{lemma:oscadjoint}, the series in \eqref{eq:w-def} is replaced by a finite sum 
starting at $m=1$ and ending at $m = q_L$ with $q_L$, $L \in \N$, 
being an element of the sequence $\{q_k\}_{k \in \N}$ generated 
by \cref{algo:AlgoHarmonic}, then the solution of \eqref{eq:backheat}
satisfies $\psi(t, 1) = P_L( \exp(\pi^2 (t - T)))$ for all $t \in [0, T)$,
where  $P_L$ denotes the polynomial in \eqref{eq:PdefPl}.
In view of \cref{th:power_series}\ref{th:power_series:iv},
the latter implies that $\psi(\cdot, 1)$ changes its sign a finite number of 
times on $[0,T)$ if $L$ is large enough
and reproduces the oscillatory behavior observed in 
the infinite summation case in the limit $L \to \infty$. 
Since the polynomial case has been discussed
in detail in \cite{troeltzsch2023bang}
(also with regard to the question of whether it is possible to prescribe 
where changes of the sign occur)
and since we are mainly interested in the existence of chattering optimal controls, 
we focus on the case of an infinite summation bound in 
\eqref{eq:w-def} in our analysis. 
\end{remark}
\smallskip

The function $w$ that arises from 
the construction in \cref{lemma:oscadjoint} for $z_1 = 0.5$
and the sequences $\{\beta_m\}_{m \in \N}$ and
 $\{z _k\}_{k \in \N}$ in \cref{tab:1} 
can be seen in \Cref{fig:2}.

\begin{figure}[htp]
\begin{center}
  \includegraphics[width=11cm]{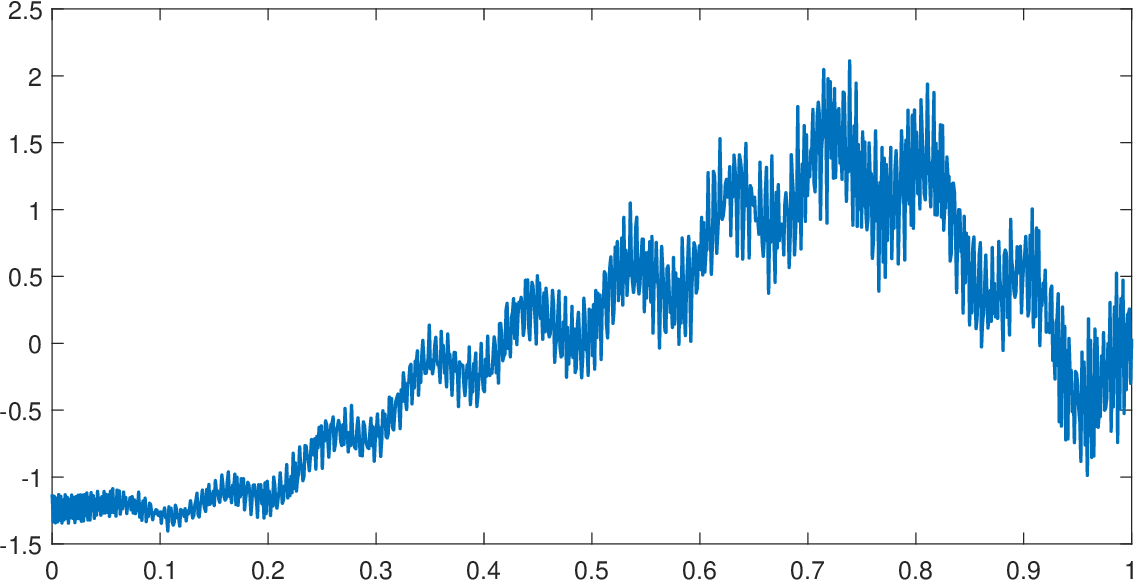}\\
  \caption{The function $w$ in \cref{lemma:oscadjoint} for $z_1 = 0.5$
and the sequences $\{\beta_m\}_{m \in \N}$ and
 $\{z _k\}_{k \in \N}$ seen in \cref{tab:1}.
  }\label{fig:2}
  \end{center}
\end{figure}

We are now in the position to prove the main result of this work, 
namely, 
that chattering optimal controls are indeed possible for the problem \eqref{eq:P}.
\smallskip

\begin{theorem}[{Existence of chattering optimal controls}]
\label{th:main_bang}
Let $T>0$ be fixed. Then there exists 
a desired state $y_d \in L^2(0,1) $ such that 
\eqref{eq:P} possesses a positive 
optimal objective function value, 
such that the analytic function 
$\bar p(\cdot, 1)\colon [0,T) \to \R$
in \cref{prop:bang_bang_principle}
changes infinitely often 
from positive to negative values and back,
and such that the right-continuous representative $\bar u\colon [0,T) \to \{-1,1\}$ 
of the optimal control 
in \cref{prop:bang_bang_principle} 
switches its function value an infinite number of times.
\end{theorem}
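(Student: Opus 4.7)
The plan is to execute the strategy sketched as items \ref{strat:item:iii}--\ref{strat:item:iv} in the introduction, using the oscillatory terminal datum provided by \cref{lemma:oscadjoint} as the backbone of the construction. Concretely, I would fix $T>0$, pick any $z_1 \in (0,1)$ (for definiteness $z_1 = 1/2$), and apply \cref{algo:AlgoHarmonic} with exponents $\alpha_m := m^2$ to produce sequences $\{\beta_m\}_{m\in\N}$, $\{z_k\}_{k\in\N}$. Then I would let $w \in L^2(0,1)$ and $\{t_k\}_{k\in\N}$ be the function and time instants defined in \eqref{eq:w-def} and \eqref{eq:tk-def} of \cref{lemma:oscadjoint}, and let $\bar p$ denote the weak solution of the backward heat equation \eqref{eq:backheat} with terminal datum $w$.

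Next I would define the candidate optimal control by
\[
\bar u(t) := -\sgn(\bar p(t,1)) \qquad \text{f.a.a.\ } t \in (0,T).
\]
This is well-posed as an element of $\Uad$ because, by \cref{lemma:oscadjoint}, the trace $\bar p(\cdot,1)$ is analytic on $[0,T)$ and, thanks to $w \neq 0$ in $L^2(0,1)$, does not vanish identically; hence its zero set in $[0,T)$ is at most countable (accumulating only at $T$) and $\bar u$ is well-defined a.e.\ with values in $\{-1,1\}$. I would then let $\bar y$ be the unique weak solution of the state equation in \eqref{eq:optsysP} driven by this $\bar u$, and finally set
\[
y_d := \bar y(T,\cdot) - w \in L^2(0,1).
\]
By construction, the terminal condition $\bar p = \bar y - y_d$ at $t=T$ reduces to $\bar p(T,\cdot) = w$, which is exactly the terminal datum used to define $\bar p$ via \eqref{eq:backheat}. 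Thus the state and adjoint equations in \eqref{eq:optsysP} are satisfied simultaneously by the triple $(\bar u, \bar y, \bar p)$.

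It then remains to verify the variational inequality in \eqref{eq:optsysP} and to conclude. The pointwise identity $\bar u = -\sgn(\bar p(\cdot,1))$ a.e.\ implies $\bar p(t,1)(u(t)-\bar u(t)) \geq 0$ for a.a.\ $t \in (0,T)$ and every $u \in \Uad$ (indeed, wherever $\bar p(t,1)>0$ we have $\bar u(t) = -1 \leq u(t)$, and vice versa), so integration yields the required inequality. By the sufficiency part of \cref{prop:necsuffoptcond}, $(\bar u,\bar y)$ is optimal for \eqref{eq:P}. To invoke \cref{prop:bang_bang_principle} and conclude, I need to check that the optimal objective function value is positive; but $\bar y(T,\cdot) - y_d = w$ and $\|w\|_{L^2(0,1)}^2 = \pi^2/12 > 0$ by \cref{lemma:oscadjoint}, so this is automatic. \cref{prop:bang_bang_principle} then delivers uniqueness of the optimal control and the existence of the right-continuous $\{-1,1\}$-valued representative. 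Finally, \cref{lemma:oscadjoint} tells us that $\bar p(t_k, 1)$ alternates sign along the sequence $t_k \nearrow T$, so $\bar p(\cdot,1)$ must have a zero in each $(t_k, t_{k+1})$; consequently $\bar u = -\sgn(\bar p(\cdot,1))$ switches its value infinitely often, proving all three claims of the theorem.

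The main obstacle in this plan is not the gluing procedure itself---which is essentially bookkeeping once \cref{lemma:oscadjoint} is available---but rather making sure that the oscillatory behavior of $\bar p(\cdot,1)$ produced by \cref{lemma:oscadjoint} actually survives the definition $\bar u = -\sgn(\bar p(\cdot,1))$ and its re-insertion into the state equation. The key point, and the reason the argument closes, is that the adjoint problem in \eqref{eq:optsysP} depends on $\bar y$ only through its terminal value $\bar y(T,\cdot)$, so choosing $y_d := \bar y(T,\cdot) - w$ decouples the adjoint from the state once $w$ is fixed, and all nontrivial oscillatory content has been shifted upstream into \cref{th:power_series} and \cref{algo:AlgoHarmonic}.
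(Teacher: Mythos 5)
Your proposal is correct and follows essentially the same route as the paper: construct $w$ and $\bar p := \psi$ via \cref{lemma:oscadjoint}, set $\bar u := -\sgn(\bar p(\cdot,1))$, solve the state equation for $\bar y$, define $y_d := \bar y(T,\cdot) - w$, verify the optimality system \eqref{eq:optsysP}, and read off positivity of the objective and the infinite switching from $\|w\|_{L^2(0,1)} > 0$ and the alternating signs of $\bar p(t_k,1)$. The only difference is that you spell out a few steps the paper compresses into ``by construction,'' namely the well-posedness of $\bar u$ via the identity theorem and the explicit check of the variational inequality.
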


\begin{proof}
Let $w$ be constructed as in \cref{lemma:oscadjoint}
for some $z_1 \in (0,1)$
and let $\psi$ be the associated solution of \eqref{eq:backheat}.
We define $\bar u := - \sgn(\psi(\cdot, 1)) \in L^\infty(0,T)$ and denote 
the solution of the state equation of \eqref{eq:P} with boundary datum $\bar u$ by $\bar y$.
We further define $y_d := \bar y(T, \cdot) - w \in L^2(0,1)$. 
Then it holds by construction that $(\bar u, \bar y)$ satisfies the system 
\eqref{eq:optsysP} with adjoint state $\bar p := \psi$ and, consequently, 
that $(\bar u, \bar y)$ solves \eqref{eq:P} with desired state $y_d$. 
As $w = \bar y(T, \cdot) - y_d \in L^2(0,1) \setminus \{0\} $,
it follows that the optimal objective function value of \eqref{eq:P} with datum $y_d$ is positive.
From the properties of $\psi = \bar p$ in \cref{lemma:oscadjoint} 
and the definition of $\bar u$, we further immediately obtain that 
$\bar u$ and $\bar p(\cdot, 1)$ switch function values as asserted.
This completes the proof. 
\end{proof}

We remark 
that we expect that the approach developed in this section
can also be used to construct chattering solutions 
for optimal control problems other than \eqref{eq:P}. 
(\Cref{th:power_series}, which is at the heart of our analysis, is, after all, completely 
independent of \eqref{eq:P}.) We leave possible extensions for future research.

\section*{Acknowledgments}
The author would like to thank
Fredi Tr\"oltzsch for valuable feedback on an early draft of this paper and for pointing out relevant references.









\bibliographystyle{alpha}
\bibliography{references}


%

\medskip
Received xxxx 20xx; revised xxxx 20xx; early access xxxx 20xx.
\medskip

\end{document}